\documentclass{article} 
\usepackage{color}
\usepackage[utf8]{inputenc} 
\usepackage{setspace}
\usepackage{graphicx} 
\usepackage[parfill]{parskip}
\usepackage{titlesec}
\usepackage{changepage}
\usepackage{paralist} 
\usepackage{verbatim} 
\usepackage{mathrsfs}
\usepackage{amssymb}
\usepackage{amsmath}
\usepackage{titlefoot}
\usepackage{tikz}

\title{Construction of Regular Non-Atomic Strictly-Positive Measures in Second-Countable Non-Atomic Locally Compact Hausdorff Spaces}
\author{Jason Bentley\\
Department of Mathematics\\
University of Central Florida\\
Orlando, Florida, USA}

\newcommand{\mc}[1]{\mathcal{#1}}
\newcommand{\mb}[1]{\mathbb{#1}}

\newcommand{\eps}{\varepsilon}

\newtheorem{theorem}{Theorem}[section]
\newtheorem{lemma}[theorem]{Lemma}
\newtheorem{proposition}[theorem]{Proposition}

\newenvironment{proof}[1][Proof]{\begin{trivlist}
\item[\hskip \labelsep {\bfseries #1}]}{\end{trivlist}}
\newenvironment{definition}[1][Definition]{\begin{trivlist}
\item[\hskip \labelsep {\bfseries #1}]}{\end{trivlist}}
\newenvironment{example}[1][Example]{\begin{trivlist}
\item[\hskip \labelsep {\bfseries #1}]}{\end{trivlist}}

\newcommand{\qed}{\nobreak \ifvmode \relax \else
      \ifdim\lastskip<1.5em \hskip-\lastskip
      \hskip1.5em plus0em minus0.5em \fi \nobreak
      \vrule height0.75em width0.5em depth0.25em\fi}


\begin{document}
\onehalfspacing
\maketitle

\unmarkedfntext{\textit{Key words and phrases.} Regular measure, non-atomic measure, strictly-positive measure, locally compact spaces, non-atomic spaces, Polish spaces, regular spaces, second-countable spaces.}
\unmarkedfntext{2010 \textit{Mathematics Subject Classification.} Primary 28C15.}

\section*{Abstract}

This paper presents a constructive proof of the existence of a regular non-atomic strictly-positive measure on any second-countable non-atomic locally compact Hausdorff space. This construction involves a sequence of finitely-additive set functions defined recursively on an ascending sequence of rings of subsets with a premeasure limit that is extendable to a measure with the desired properties. Non-atomicity of the space provides a non-trivial way to ensure that the limit is a premeasure.

\pagebreak
\section{Introduction}

One well-known result from measure theory is the construction of the Lebesgue measure on the real line. One construction begins with a set function defined on the ring of finite unions of precompact intervals with rational endpoints which returns the total length. This construction of the Lebesgue measure has several crucial steps to show:

\begin{itemize}
\item[1. ] The set function has finite additivity on the ring. 
\item[2. ] The outer measure is at most $(\le)$ the set function on open sets in the ring. 
\item[3. ] The outer measure is at least $(\ge)$ the set function on compact sets in the ring. 
\item[4. ] The outer measure on boundaries of open sets in the ring (subsets of $\mb{Q}$) is zero. 
\item[5. ] The outer measure and the set function agree on the ring, which also shows that the set function is a pre-measure on the ring. 
\item[6. ] Apply Carath\'{e}odory's Extension Theorem to extend the set function to the Lebesgue measure on the Borel subsets of the real line.
\end{itemize} 

The Lebesgue measure on Borel sets is known to be regular, non-atomic and strictly-positive. On which topological spaces can measures with such properties be guaranteed?

This paper answers this question by applying a similar construction to 2nd-countable non-atomic locally compact Hausdorff spaces. The choice of such spaces is motivated by the steps involved in the Lebesgue measure construction. In fact, steps 2 and 3 are guaranteed with the usual choice of outer measure, and once step 4 is established, steps 5 and 6 immediately follow. 

Steps 1 and 4 prove to be challenging for these spaces. Indeed, the space is not necessarily a topological group, the ring of sets is only described topologically (or via a metric), and the finitely additive set function is not as clearly or easily defined. 

This paper provides a careful construction of a finitely-additive set function via the limit of a sequence of set functions defined recursively on a growing sequence of rings. The sequence of rings of sets and the sequence of set functions must be coupled together meticulously so that the outer measure satisfies step 4 and I thank Piotr Mikusi\'{n}ski for the comments which gave me that insight. Once the measure is formed, it can be shown to be regular, non-atomic and strictly-positive.

\section{Notation}

Let $(X,\tau_X)$ be a topological space and let $A \subseteq X$. Then $A^\circ$, $\overline{A}$, $\partial A$ and $A^e$ denote the interior, closure, boundary and exterior of $A$ respectively. 

For sets $A$ and $B$, the expression $A - B$ denotes the relative set complement: that is, $a \in A - B$ if $a \in A$ and $a \notin B$. In this paper, $``\subset"$ will always mean proper subset, and $``\subseteq"$ will always mean subset or equality. An open set $U$ is called \emph{regular} if $U$ is the interior of $\overline{U}$ and a closed set $F$ is called \emph{regular} if $F$ is the closure of $F^\circ$.

\section{Non-Atomic Topologies}

\begin{definition}
A topological space $(X,\tau_X)$, or a topology $\tau_X$ is \emph{non-atomic} if for all $x \in X$ and for every open $U$ containing $x$, there exists an open set $V$ with $x \in V \subset U$. 
\end{definition}

The following properties of non-atomic topological spaces can be easily verified by definition.

\begin{proposition}\label{prop_1}
~\newline
\vspace{-15 pt}
\begin{itemize}
\item[(a)] If $(X,\tau_X)$ is non-atomic, then every non-empty open set in $\tau_X$ must be infinite in cardinality.
\item[(b)] If $(X,\tau_X)$ is a $T_1$-space with all non-empty open sets having infinite cardinality, then $(X,\tau_X)$ is non-atomic.
\item[(c)] Let $(X,\tau_X)$ be a non-atomic topological space. Then every $x \in X$ and any open neighborhood $U$ of $x$ yield infinitely many open neighborhoods of $x$ which are proper subsets of $U$.
\end{itemize}
\end{proposition}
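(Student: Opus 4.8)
The plan is to dispatch the three parts in turn, each a short direct argument from the definition of non-atomicity; recall that in this paper ``$\subset$'' means proper inclusion, so the open set $V$ produced by non-atomicity always satisfies $V \neq U$.

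For part (a), I would build a strictly decreasing chain of nonempty open sets. Fix a nonempty open $U$, set $W_0 = U$, and recursively, given a nonempty open $W_k$, choose $x \in W_k$ and apply non-atomicity to get an open $W_{k+1}$ with $x \in W_{k+1} \subset W_k$; then $W_{k+1}$ is nonempty because it contains $x$, and the inclusion is proper. Iterating, for any $n$ I obtain $W_0 \supset W_1 \supset \cdots \supset W_n$ with every inclusion proper, so I may select $y_k \in W_k - W_{k+1}$ for $0 \le k < n$. These $n$ points are pairwise distinct, since $y_k \in W_k$ but $y_k \notin W_j$ for $j > k$, and they all lie in $U$. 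As $n$ is arbitrary, $U$ is infinite.

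For part (b), fix $x \in X$ and an open $U \ni x$. By hypothesis $U$ is infinite, so there is a point $y \in U$ with $y \neq x$. Since $X$ is $T_1$, the singleton $\{y\}$ is closed, hence $V := U - \{y\}$ is open; it contains $x$, and it is a proper subset of $U$ because $y \in U - V$. Thus $V$ is exactly the open set demanded by the definition, so $\tau_X$ is non-atomic. For part (c), fix $x$ and an open neighborhood $U$ of $x$, and use non-atomicity repeatedly to build open sets $U \supset U_1 \supset U_2 \supset \cdots$ with $x \in U_k$ for all $k$: having $U_k$, apply the definition to $x \in U_k$ to obtain open $U_{k+1}$ with $x \in U_{k+1} \subset U_k$. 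Each $U_k$ is an open neighborhood of $x$ and a proper subset of $U$, and the properness of the inclusions forces the $U_k$ to be pairwise distinct, so there are infinitely many such neighborhoods.

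I expect no genuine obstacle here; the only points needing a little care are ensuring in (a) and (c) that the recursively constructed sets stay nonempty (automatic, since each contains the chosen $x$) and are genuinely distinct (automatic, since each inclusion is proper), and in (b) invoking the $T_1$ axiom precisely so that $U - \{y\}$ remains open. As an aside, part (a) can also be derived from part (c) by extracting one point from each set difference in the strictly decreasing chain of neighborhoods produced there.
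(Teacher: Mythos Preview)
Your proposal is correct. The paper gives no proof of this proposition beyond the remark that the properties ``can be easily verified by definition,'' and your three direct arguments---building a strictly decreasing chain for (a) and (c), and using the $T_1$ property to delete a point for (b)---are exactly the routine verifications that remark invites.
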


\begin{example}
Any normed space over the real or complex field is non-atomic. In particular, $(\mb{R},|\cdot|)$ is non-atomic since $x \in (a,b)$ implies $x \in (a+\eps, b-\eps) \subset (a,b)$ for any $\eps$ satisfying $0 < \eps < \min\{x-a, b-x\}$.
\end{example}

Non-atomic spaces have a nice property pertaining to topological bases.

\begin{proposition}\label{prop_2}
Let $(X,\tau_X)$ be a non-atomic topological space, let $\{U_i : i \in I\}$ be a topological basis for $\tau_X$, and let $J$ be a cofinite subset of $I$. Then $\{U_j : j \in J\}$ is also a topological basis for $\tau_X$.
\end{proposition}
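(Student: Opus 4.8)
The plan is to reduce to the case where exactly one basis element is discarded and then iterate. Write $I - J = \{i_1, \dots, i_n\}$, which is finite since $J$ is cofinite, and induct on $n$. When $n = 0$ there is nothing to prove. For the inductive step, set $J' = J \cup \{i_n\} = I - \{i_1, \dots, i_{n-1}\}$; this is again cofinite, so by the inductive hypothesis $\{U_j : j \in J'\}$ is a basis for $\tau_X$, and it remains only to show that deleting the single index $i_n$ from this basis still leaves a basis. Thus the whole proposition follows once I establish the claim: if $\mathcal{B} = \{U_i : i \in I\}$ is a basis for a non-atomic space and $i_0 \in I$, then $\mathcal{B}' = \{U_i : i \in I - \{i_0\}\}$ is a basis.

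To prove the claim I will use that a family of open sets is a basis precisely when every open set $W$ and every point $x \in W$ admit a member of the family containing $x$ and contained in $W$. So fix an open set $W$ and a point $x \in W$. Since $\mathcal{B}$ is a basis, choose $i \in I$ with $x \in U_i \subseteq W$. If $i \neq i_0$, then $U_i \in \mathcal{B}'$ and we are done. Otherwise $x \in U_{i_0} \subseteq W$, and non-atomicity supplies an open set $V$ with $x \in V \subset U_{i_0}$, a \emph{proper} inclusion by the paper's convention on $\subset$. Applying the basis property of $\mathcal{B}$ once more, pick $i' \in I$ with $x \in U_{i'} \subseteq V$. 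Then $U_{i'} \subseteq V \subsetneq U_{i_0}$, so $U_{i'} \neq U_{i_0}$; in particular $i' \neq i_0$, hence $U_{i'} \in \mathcal{B}'$, and $x \in U_{i'} \subseteq V \subseteq W$. This verifies the basis condition for $\mathcal{B}'$ and completes the claim, hence the induction.

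The only place any hypothesis beyond ``$\mathcal{B}$ is a basis'' enters is the single appeal to non-atomicity, and the only subtlety is ensuring that the replacement index $i'$ is genuinely distinct from the deleted index $i_0$; this is exactly where the \emph{properness} of the inclusion $V \subset U_{i_0}$ in the definition of non-atomic is essential (equivalently, one could invoke Proposition~\ref{prop_1}(c) to obtain a proper sub-neighborhood directly). Beyond this bookkeeping I anticipate no difficulties.
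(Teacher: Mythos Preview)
Your argument is correct. The paper does not actually supply a proof of this proposition; it is simply stated (the surrounding text only remarks that non-atomic spaces enjoy this basis property), so there is no ``paper's proof'' to compare against. Your reduction to the removal of a single index together with the use of non-atomicity to produce a \emph{strictly} smaller neighborhood---thereby forcing the replacement basic set $U_{i'}$ to differ from $U_{i_0}$ as a set, hence $i' \neq i_0$---is exactly the expected verification, and the induction on $|I - J|$ is routine.
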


For non-atomic locally compact Hausdorff spaces, the next proposition allows for open sets to be ``bored" by compact closures of open subsets. This will be utilized in Lemma \ref{lem_permutation} later.

\begin{proposition}\label{prop_3}
Let $(X,\tau_X)$ be a non-atomic locally compact Hausdorff space. Then every open neighborhood $x \in U$ admits an open neighborhood $x \in V$ with compact closure and with $\overline{V} \subset U$. Additionally, $V$ can be chosen to be regular such that $V$ and $U - \overline{V}$ are disjoint non-empty open subsets of $U$ with $U = V \uplus \partial V \uplus (U - \overline{V})$.
\end{proposition}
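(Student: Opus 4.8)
The plan is to obtain the compact-closure neighborhood first and then upgrade it to a regular one, using non-atomicity only to force the containment $\overline{V} \subset U$ to be proper. I will present this in three stages: shrink $U$, produce $V_0$ with compact closure, regularize, then verify the decomposition.

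To begin, I would use non-atomicity to pass to a strictly smaller neighborhood: choose an open $U_0$ with $x \in U_0 \subset U$. Then I would apply the standard fact that in a locally compact Hausdorff space every open neighborhood of a point contains an open neighborhood of that point with compact closure still lying inside the given open set; applied to $x \in U_0$ this produces an open $V_0$ with $x \in V_0 \subseteq \overline{V_0} \subseteq U_0$ and $\overline{V_0}$ compact. Since $U_0 \subset U$, this already gives $\overline{V_0} \subset U$ and hence $U - \overline{V_0} \ne \emptyset$. This is precisely where non-atomicity is indispensable: in a general locally compact Hausdorff space $U$ might be compact and open (for instance $U = X$), and then no $V$ with $\overline{V} \subset U$ exists.

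Next I would set $V := (\overline{V_0})^\circ$. Since $V \subseteq \overline{V_0}$ and $\overline{V_0}$ is closed we get $\overline{V} \subseteq \overline{V_0}$, so $\overline{V}$ is a closed subset of a compact set, hence compact, and $\overline{V} \subseteq \overline{V_0} \subset U$. Monotonicity of the interior gives $(\overline{V})^\circ \subseteq (\overline{V_0})^\circ = V$, while $V$ being open with $V \subseteq \overline{V}$ gives $V \subseteq (\overline{V})^\circ$; hence $V = (\overline{V})^\circ$, i.e.\ $V$ is regular. Also $x \in V_0 \subseteq V$, so $V$ retains all the required properties while being regular.

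Finally I would check the decomposition. Because $V$ is open, $\partial V = \overline{V} - V$, so $V$ and $\partial V$ are disjoint with $V \uplus \partial V = \overline{V}$; since $\overline{V} \subseteq U$ we have $\overline{V} \uplus (U - \overline{V}) = U$, and $\partial V \subseteq \overline{V}$ is disjoint from $U - \overline{V}$, so altogether $U = V \uplus \partial V \uplus (U - \overline{V})$ with every piece contained in $U$. Moreover $V$ is nonempty (it contains $x$), $U - \overline{V}$ is open and nonempty by the proper containment above, and $V \cap (U - \overline{V}) = \emptyset$ since $V \subseteq \overline{V}$. I do not anticipate any real difficulty; the only steps demanding care are combining non-atomicity with local compactness to get the strict inclusion $\overline{V} \subset U$, and confirming that the regularization $V = (\overline{V_0})^\circ$ keeps the closure compact and inside $U$.
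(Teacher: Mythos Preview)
The paper states this proposition without proof, so there is nothing to compare your argument against; on its own merits your proof is correct and complete. Your use of non-atomicity to first pass to a strictly smaller $U_0 \subset U$, then invoke the standard local-compactness shrinking lemma inside $U_0$, cleanly guarantees the \emph{proper} inclusion $\overline{V_0} \subset U$ that makes $U - \overline{V}$ nonempty, and the regularization $V := (\overline{V_0})^\circ$ is handled carefully (the chain $V \subseteq (\overline{V})^\circ \subseteq (\overline{V_0})^\circ = V$ is exactly what is needed). The only quibble is the parenthetical remark that ``if $U = X$ is compact and open then no $V$ with $\overline{V} \subset U$ exists'': that is not literally true in general, though the spirit---that without non-atomicity an isolated point could obstruct the strict inclusion---is right. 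This does not affect the argument itself.
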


\section{Constructing the Desired Measure}

\begin{theorem}
Let $(X,\Sigma_X)$ be a measurable space generated by a second-countable locally compact Hausdorff non-atomic space $(X,\tau_X)$. Then there exists a finite regular non-atomic strictly-positive measure on $(X,\Sigma_X)$.
\end{theorem}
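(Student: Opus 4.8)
The plan is to mimic the six-step Lebesgue construction sketched in the introduction, but with the ring of "finite unions of precompact rational intervals" replaced by an abstract gadget built from the second-countable structure. Since $(X,\tau_X)$ is second-countable, locally compact, Hausdorff and non-atomic, I would first fix a countable basis $\{B_n\}$ which, by Proposition \ref{prop_3} together with Proposition \ref{prop_2}, may be taken to consist of regular open sets with compact closures such that each $\overline{B_n}$ is contained in basis elements as needed; this gives me a countable collection of "bricks" with well-behaved boundaries. From these I would build an ascending sequence of rings $\mc{R}_1 \subseteq \mc{R}_2 \subseteq \cdots$, where $\mc{R}_k$ is generated (under finite unions, intersections, and relative complements) by the first $k$ bricks, so that $\mc{R} := \bigcup_k \mc{R}_k$ is a ring generating $\Sigma_X$.

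Next I would define the finitely-additive set function recursively. On $\mc{R}_1$ I assign the generating brick some positive finite value (and $0$ to the empty set); having defined a finitely-additive $\mu_k$ on $\mc{R}_k$, I extend it to $\mu_{k+1}$ on $\mc{R}_{k+1}$ by deciding how the mass of each atom of $\mc{R}_k$ is split between the new brick and its complement. This is exactly the place where the coupling of the two sequences matters, and where non-atomicity is essential: when the new brick $B_{k+1}$ meets an atom $A$ of $\mc{R}_k$, non-atomicity of the space lets me find a regular open set strictly between, so the intersection and the difference are both "topologically fat" (have non-empty interior), which is what allows the split to be made with strictly positive values on each piece and, crucially, to be made consistently so that the limiting set function $\mu := \lim_k \mu_k$ on $\mc{R}$ is still finitely additive. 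I would normalize the total assigned masses so that $\sum$-type bounds keep $\mu(X) < \infty$ (e.g. geometrically decaying increments), which handles finiteness.

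Then I would take the outer measure $\mu^*$ generated by $\mu$ on $\mc{R}$ and verify the analogues of steps 2–5. Steps 2 and 3 (outer measure $\le \mu$ on relatively open members of $\mc{R}$ and $\ge \mu$ on compact members) should follow from the standard Carathéodory machinery together with local compactness, exactly as in the Lebesgue case. Step 4 — that $\mu^*(\partial U) = 0$ for open $U \in \mc{R}$ — is where I expect the real fight: I would need the recursive construction to have been arranged so that every brick boundary, and hence every boundary of a member of $\mc{R}$, is covered efficiently by later bricks with total mass tending to $0$. This is the reason the rings and the set functions must be built simultaneously rather than separately: at stage $k$ I must already reserve enough "budget" to later cover $\partial B_{k+1}$ cheaply, using that the $B_n$ are regular (so boundaries are nowhere dense) and that non-atomicity prevents boundaries from carrying forced mass. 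Once step 4 holds, step 5 (that $\mu^* = \mu$ on $\mc{R}$, so $\mu$ is a premeasure) and step 6 (Carathéodory extension to a measure $\nu$ on $\Sigma_X$) are immediate.

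Finally I would read off the three desired properties of the extended measure $\nu$. Regularity follows from the construction via approximation by the open and compact members of $\mc{R}$, i.e. from steps 2–3 together with second-countability. Strict positivity follows because every non-empty open set contains some brick $B_n$, and every brick was assigned strictly positive mass preserved in the limit. Non-atomicity of $\nu$ follows from non-atomicity of the space: given a Borel set of positive measure, I can find a brick inside a suitable open set on which $\nu$ is positive, then use Proposition \ref{prop_3} to split that brick into two regular open pieces each of strictly smaller positive measure, iterating to show no set of positive measure is an atom. The main obstacle, to repeat, is the bookkeeping in the recursion that simultaneously guarantees finite additivity in the limit and the vanishing of the outer measure on boundaries; everything else is a localized version of the classical argument.
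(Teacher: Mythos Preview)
Your outline matches the paper's six-step strategy, but two places are genuine gaps rather than details to be filled in.

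First, Step 4. You correctly flag that the brick boundaries must have zero outer measure and that this forces the rings and set functions to be built together, but you give no mechanism. ``Reserving budget'' and ``geometrically decaying increments'' control only the total mass, not the mass near a fixed $\partial B_j$; adding $B_{k+1}$ and splitting the atoms it meets does nothing to shrink any cover of $\partial B_j$ for $j\le k$. The paper's device is a specific \emph{permutation} of the basis (Lemma~\ref{lem_permutation}): it interleaves, in a diagonal order, finite open covers $\mc{G}_{i,j}$ of each $\partial V_i$ with ``hole-boring'' basis sets $\mc{F}_{i,j}$ whose closures sit inside every current atom of $\mc{A}_k$. Because the recursion in Lemma~\ref{lem_set_functions} halves the mass of any atom that gets properly split, inserting one hole into each atom halves all the masses, so the next cover $\cup\mc{G}_{i,j+1}\subseteq \cup\mc{G}_{i,j}-\cup\mc{F}_{i,j+1}$ has at most half the $\kappa$-value of $\cup\mc{G}_{i,j}$, and hence $\kappa^*(\partial V_i)=0$. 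This interleaving is the missing idea.

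Second, your non-atomicity argument fails as stated: a Borel set $A$ with $\nu(A)>0$ need not contain any open set (a fat Cantor set in $\mb{R}$, for instance), so there is no brick inside $A$ to split via Proposition~\ref{prop_3}; strict positivity and regularity alone do not rule out atoms (e.g.\ Lebesgue measure plus $\delta_0$ on $[0,1]$). The paper instead arranges the same hole-boring to fragment \emph{every} atom of the ring, so that $\max\{\kappa(A):A\in\mc{A}_n\}\to 0$ along the permuted sequence, and then invokes the characterization (Lemma~\ref{lem_non_atomic}) that a finite measure is non-atomic iff for every $\eps>0$ the space admits a finite measurable partition into pieces of measure less than $\eps$. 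Both gaps are closed by the same permutation lemma, which is really the technical heart of the proof.
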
\begin{proof}

First, we need a candidate for the sequence of rings of sets. Since $(X,\tau_X)$ is a second-countable locally compact regular space, there exists a countable basis of nonempty regular open sets in $X$ with compact closure. Let $(V_i) = (V_i)_{i=1}^\infty$ be one such basis expressed as a sequence. 

Regularity of the basis sequence grants several useful properties: $V_i$ being regular implies that $\overline{V_i}$ and $V_i^e$ are also regular; finite intersections of regular open sets are regular open sets; $A$ and $B$ being nonempty regular open sets with $A$ intersecting $\partial B$ implies that $A \cap B$ and $A \cap B^e$ are nonempty open sets.

The following lemma produces a sequence of rings of sets, denoted by ($\mc{D}_k$), and a limit ring of subsets $\mc{D}$ which contains basis sets  $V_i$ for $i \in \mb{N}$ and generates all Borel sets from $(X,\tau_X)$.

\begin{lemma}\label{lem_rings_of_sets}
Let $(X,\Sigma_X)$ be a second-countable non-atomic locally compact Hausdorff space with previously established basis sequence $(V_i)_{i=1}^\infty$. For each $k \in \mb{N}$, define
\begin{align*}
\mc{A}_k &:= \{\cap_{i=1}^k R_i ~|~ R_i =V_i \text{ or } R_i = V_i^e \text{ for all } 1 \le i \le k\} - \{\varnothing, \cap_{i=1}^k V_i^e\}, \\
\mc{B}_k &:= \{\uplus_{j=1}^n S_j~|~ n \in \mb{N}_0, S_j \in \mc{A}_k\}, \\
\mc{C}_k &:= \{C ~|~ C \subseteq \cup_{i=1}^k \partial V_i\}, \\
\mc{D}_k &:= \{B \uplus C ~|~ B \in \mc{B}_k ,~ C \in \mc{C}_k\} .
\end{align*}
Then: $\mc{A}_k$ consists of pairwise disjoint nonempty regular open sets; every $A \in \mc{A}_{k+1}$ with $A \subseteq \cup_{i=1}^k V_i$ has a unique $B \in \mc{A}_k$ with $A \subseteq B$; if $S,T \in \mc{B}_k$, then $S \cup T , S\cap T,\text{and } S \cap T^e \in \mc{B}_k$; $(\mc{D}_k)$ is an ascending sequence of rings of sets; $\mc{D} := \cup_{k=1}^\infty \mc{D}_k$ is a ring of sets that generates the $\sigma$-algebra $\Sigma_X$ and is insensitive to permutations on $(V_i)_{i=1}^\infty$.
\end{lemma}

\begin{proof}

If $\varnothing \not= A = \cap_{i=1}^{k+1} R_i \in \mc{A}_{k+1}$ with $A \subseteq \cup_{i=1}^k V_i$, then $B:= \cap_{i=1}^k R_i$ is the unique set in $\mc{A}_k$ containing $A$.

Let $S := \uplus_{i=1}^m S_i$ and $T := \uplus_{j=1}^n T_j$ be in $\mc{B}_k$. It is fairly straight-forward to show that $\mc{B}_k$ is closed under finite unions and intersections. We will show that $S \cap T^e \in \mc{B}_k$. If $m=n=1$, then $S,T \in \mc{A}_k$. Since sets in $\mc{A}_k$ are contained in each other's exteriors, the open set $\cup \{A \in \mc{A}_k - \{T\} \} \in \mc{B}_k$ is a subset of $T^e$. The remaining possible points in $T^e$ are either in $\cup_{i=1}^k \partial V_i$ or in $\cap_{i=1}^k V_i^e$, which are both disjoint with $S$. Therefore, $S \cap T^e = S \cap (\cup\{A \in \mc{A}_k - \{T\}\}) \in \mc{B}_k$. This elementary case, in tandem with closure of $\mc{B}_k$ under finite unions and intersections, proves the general case since
\begin{align*}
S \cap T^e &=  (\uplus_{i=1}^m S_i) \cap (\uplus_{j=1}^n T_j)^e =  (\uplus_{i=1}^m S_i) \cap (\cap_{j=1}^n T_j^e) = \uplus_{i=1}^m \cap_{j=1}^n ( S_i \cap T_j^e).
\end{align*}
It is clear that $\mc{C}_k$ is closed under finite unions.

If $D_1= B_1 \uplus C_1$ and $D_2= B_2 \uplus C_2$, where $B_1, B_2 \in \mc{B}_k$ and $C_1, C_2 \in \mc{C}_k$, then it follows that $D_1 \cup D_2 = (B_1 \cup B_2) \uplus (C_1 \cup C_2) \in \mc{D}_k$ via closure of $\mc{B}_k$ and $\mc{C}_k$ under finite unions. Furthermore, 
\begin{align*}
D_1 - D_2 &= (B_1 \uplus C_1) \cap (B_2 \uplus C_2)^c = (B_1 \uplus C_1) \cap (B_2^c \cap C_2^c) \\
&= (B_1 \cap B_2^c \cap C_2^c) \uplus (C_1 \cap B_2^c \cap C_2^c) \\
&= (B_1 \cap B_2^c) \uplus (C_1 \cap B_2^c \cap C_2^c) \\
&= (B_1 \cap B_2^e) \uplus ((B_1 \cap \partial B_2) \cup (C_1 \cap B_2^c \cap C_2^c)) \in \mc{D}_k,
\end{align*}

since $B_1 \cap B_2^e \in \mc{B}_k$ and the second set is in $\mc{C}_k$. Therefore, $\mc{D}_k$ is a ring of sets.  It follows easily that $\mc{D}= \cup_{k=1}^\infty \mc{D}_k$ is also a ring of sets. Since $\mc{D}$ contains all basis sets and since $\tau_X$ is second-countable, $\mc{D}$ generates $\Sigma_X$.

If $D_k := B_k \uplus C_k \in \mc{D}_k$ with $B_k \in \mc{B}_k$ and $C_k \in \mc{C}_k$, then notice that $(B_k \cap V_{k+1}) \cup (B_k \cap V_{k+1}^e) \in \mc{B}_{k+1}$ and that $C_k \cup (B_k \cap \partial V_{k+1}) \in \mc{C}_{k+1}$ ensure that $D_k \in \mc{D}_{k+1}$. Therefore, $\mc{D}_k \subseteq \mc{D}_{k+1}$ for all natural $k$.

Finally, let $\pi: \mb{N} \to \mb{N}$ be any permutation and let $\mc{A}_k', \mc{B}_k', \mc{C}_k', \mc{D}_k', \mc{D}'$ denote the collections discussed earlier for the permutated sequence $(V_{\pi(i)})_{i=1}^\infty$. 

Let $K \in \mc{D}$. Then $K \in \mc{D}_n$ for some natural $n$. Let $m$ be the smallest natural number such that $\{V_{\pi(1)}, V_{\pi(2)}, ... V_{\pi(m)}\} \supseteq \{V_1, V_2, \dots V_n\}$. It follows that $K \in \mc{D}_m' \subset \mc{D}'$. A similar argument suffices to show that $\mc{D}' \subseteq \mc{D}$. ~~$\blacksquare$
\end{proof}

Second, we need a finitely additive set function defined on $\mc{D}$.
Given a basis sequence $(V_i)_{i=1}^\infty$, we need to intuitively develop a sequence of set functions $\mu_m : \mc{A}_{m} \to [0,1]$ for $m \in \mb{N}$. The crucial idea is that when an open set $A \in \mc{A}_m$ intersects $\partial V_{m+1}$, regularity properties of $A$ and $V_{m+1}$ ensure that $A$ is fragmented by $\partial V_{m+1}$ into two nonempty open sets $A' := A \cap V_{m+1}$ and $A'':= A \cap V_{m+1}^e$ in $\mc{A}_{m+1}$. Hence, we can evenly divide the size $\mu_m(A)$ in half and distribute each to $A'$ and $A''$, meaning we insist that $\mu_{m+1}(A') = \mu_{m+1}(A'') = \frac{1}{2} \mu_m (A)$. Of course, we also need to insist that the next set function equals the previous set function for open sets in $\mc{A}_m$ that persist in $\mc{A}_{m+1}$. Finally, when ``new regions" are introduced, we can freely choose that size, and we shall do so in a way to cause all set functions to have maximum size output less than 1. These components correspond to lines 3-5 in the construction of $(\mu_m )_{m=1}^\infty$ in Lemma \ref{lem_set_functions}. The following figure illustrates how the set functions on $(\mc{A}_k)$ behave.

\begin{tikzpicture}
\draw (8,0)--(0,0)--(0,4)--(4,4)--(4,0);
\draw (4,4)--(12,4)--(12,0)--(8,0)--(8,4);
\draw[dashed] (1.2,2.8) ellipse (1cm and 1cm);
\draw[dashed] (5.2,2.8) ellipse (1cm and 1cm);
\draw[dashed] (9.2,2.8) ellipse (1cm and 1cm);
\draw[dashed] (6.5,2) ellipse (1.2cm and 1.8cm);
\draw[dashed] (10.5,2) ellipse (1.2cm and 1.8cm);
\draw[dashed] (10.5,1.2) ellipse (.8cm and .8cm);
\draw (.5,.5) node {$\mu_1$};
\draw (4.5,.5) node {$\mu_2$};
\draw (8.5,.5) node {$\mu_3$};
\draw (3.7,3.7) node {$X$};
\draw (7.7,3.7) node {$X$};
\draw (11.7,3.7) node {$X$};
\draw (1.2,3.5) node {$V_1$};
\draw (5.2,3.5) node {$V_1$};
\draw (9.2,3.5) node {$V_1$};
\draw (1.2,2.8) node {$\frac12$};
\draw (6.5,3.5) node {$V_2$};
\draw (10.5,3.5) node {$V_2$};
\draw (10.5,1.7) node {$V_3$};
\draw (4.8,2.8) node {$\frac14$};
\draw (8.8,2.8) node {$\frac14$};
\draw (6.8,2.6) node {$\frac14$};
\draw (5.8,2.7) node {$\frac14$};
\draw (9.8,2.7) node {$\frac14$};
\draw (10.8,2.6) node {$\frac18$};
\draw (10.5,1.1) node {$\frac18$};
\end{tikzpicture}

Once $(\mu_m)_{m=1}^\infty$ is constructed, we can easily develop a sequence of finitely additive set functions $(\kappa_n)_{n=1}^\infty$ such that $\kappa_{n+1}$ is an extension of $\kappa_n$ for all $n \in \mb{N}$, then define a finitely additive set function $\kappa$ as the overall extension of $(\kappa_n)_{n=1}^\infty$ to $\mc{D}$.

\begin{lemma}\label{lem_set_functions}
Let $(V_i)_{i=1}^\infty$ be a basis sequence for $(X,\tau_X)$, with $\mc{A}_k, \mc{B}_k, \mc{C}_k, \mc{D}_k$ and $\mc{D}$ previously developed from the basis sequence for all $k \in \mb{N}$. Recursively define a sequence of functions $\mu_m : \mc{A}_m \cup \{\varnothing\} \to [0,1]$ as follows:
\begin{align}
&\mu_1(\varnothing) := 0 ;\\
&\mu_1(V_1) := 1/2 ;\\
&\mu_m(A) := 1/2 \cdot \mu_{m-1}(B)  &\text{ if }& \varnothing \not= A \subset B \in \mc{A}_{m-1};\\
&\mu_m(A) := \mu_{m-1}(A)  &\text{ if }& A \in \mc{A}_{m-1};\\
&\mu_m(V_m - \cup_{i=1}^{m-1} \overline{V_i}) := 1/2^{m} &\text{ if }& V_m - \cup_{i=1}^{m-1} \overline{V_i} \not= \varnothing .
\end{align}

Then there exists a finitely additive set function $\kappa : \mc{D} \to [0,1]$ such that $\kappa(A) = \mu_m(A)$ when $A \in \mc{A}_m$.
\end{lemma}

\begin{proof}

Define a sequence of functions $\nu_k : \mc{B}_k \to [0,1]$ via $\nu_k(\uplus_{j=1}^n S_j) := \sum_{j=1}^n \mu_k(S_j)$, where $S_j \in \mc{A}_k$ for $1 \le j \le n$. Then define a sequence of functions $\kappa_n: \mc{D}_n \to [0,1]$ via $\kappa_n(S \cup T) = \nu_n(S)$, where $S \in \mc{B}_n$ and $T \in \mc{C}_n$. 

Finite additivity of $(\nu_k)$ and $(\kappa_n)$ is easy to verify. It follows by the definitions of $(\nu_k)$ and $(\kappa_n)$ and by (3) and (4) that any set in $\mc{D}_N$ will have the same value under all functions $\kappa_n$ with $n \ge N$. Therefore, the set function $\kappa : \mc{D} \to [0,1]$ such that $\kappa(T) = \kappa_n(T)$ when $T \in \mc{D}_n$ is well defined. Finite additivity of $\kappa$ follows from $(\kappa_n)$. $\blacksquare$
\end{proof}

Third, we need to show that $\cup_k \partial V_k$ has zero outer measure. However, notice that the set function $\kappa$ we develop \emph{depends on the order of the basis sequence} $(V_i)$. This is important, because without a careful choice made for the ordering of these basis sets, Step 4 in the outline may be difficult or impossible. What kind of sequence do we select? The next lemma serves two purposes: to provide the crucial properties needed to obtain Steps 1 and 4 in the introduction, and to help verify non-atomicity of the measure formed at the end.

\begin{lemma}\label{lem_permutation}
Let $(X,\tau_X)$ be a second-countable non-atomic locally compact Hausdorff space, let $(V_i)_{i=1}^\infty$ be a topological basis sequence, and let $(\mc{A}_k)_{k=1}^\infty$ be the sequence of collections of sets formed in Lemma \ref{lem_rings_of_sets} with respect to $(V_i)_{i=1}^\infty$. There exists a permutation $\pi: \mb{N} \to \mb{N}$ such that the set function $\kappa$ from Lemma \ref{lem_set_functions} formed with respect to $(V_{\pi(i)})_{i=1}^\infty$ has the following properties:
\begin{itemize}
\item[(a) ] The set $\cup_{k=1}^\infty \partial V_k$ has zero outer measure.
\item[(b) ] $\max\{\kappa(A) : A \in \mc{A}_n\} \to 0$ as $n \to \infty$.
\end{itemize}
\end{lemma}

\begin{proof}

Note that the boundaries $(\partial V_k)_{k=1}^\infty$ are compact, so we can find covers of them using other basis open sets. The utility of the non-atomic topological space is that we can purposefully use the closures of other basis elements to bore ``closed holes" into a given cover of a given boundary $\partial V_i$, then find a \emph{better} cover of the same boundary that does not intersect these holes. The holes should cause the new cover to have outer measure at most half of the previous cover's outer measure when $\kappa$ is formed. This process is repeated for all $\partial V_i$ countably many times so that the outer measure for each must be zero. Fortunately, the implementation below will also satisfy (b).

To form a permutation with the desired properties, we form a partition of $\mb{N}$ into three double-indexed families $\{F_{i,j} : i,j \in \mb{N} \}$, $\{G_{i,j} : i,j \in \mb{N} \}$ and $\{H_{i,j} : i,j \in \mb{N} \}$ of finite sets such that the following holds:

\begin{itemize}
\item[(a) ] $\mc{G}_{i,j} := \{V_k ~|~ k \in G_{i,j}\}$ covers $\partial V_i$ for $i,j \in \mb{N}$; that is, $\partial V_i \subset \cup \mc{G}_{i,j}$.
\item[(b) ] $\mc{F}_{i,j} := \{\overline{V_k} ~|~ k \in F_{i,j}\}$ for $i,j \in \mb{N}$ satisfies $\cup \mc{G}_{i,j} \subseteq \cup \mc{G}_{i,j-1} - \cup \mc{F}_{i,j}$ for all $i \ge 1, j \ge 2$.
\item[(c) ] With $g(i,j) := \max G_{i,j}$, for each $i\ge 1, j \ge 2$, and for each $U \in \mc{A}_{g(i+1,j-1)}$, there exists a unique $K \in \mc{F}_{i,j}$ such that $K \subset U$.
\item[(d) ] $\max G_{i,j} < \min G_{i',j'}$ when $i+j < i'+j'$ or when $i+j = i'+j'$ and $j < j'$. The same is true for $\{F_{i,j}\}_{i,j=1}^\infty$ and $\{H_{i,j}\}_{i,j=1}^\infty$ whenever the compared sets are both nonempty.
\item[(e) ]  $H_{i,j}$ are remainder sets; that is, \\
$H_{1,1} = \{1, \dots,g(1,1)\} - G_{1,1}$, \\
$H_{i,1} = \{g(1,i-1)+1,\dots,g(i,1)\} - G_{i,j}$ for all $i \ge 2$, and \\
$H_{i,j} = \{g(i-1,j+1)+1,\dots,g(i,j)\} - (G_{i,j} \cup F_{i,j})$ for all $i \ge 1, j \ge 2.$
\end{itemize}

After forming the sets above, we define the desired permutation $\pi : \mb{N} \to \mb{N}$, and hence the preferred sequence $(W_i)_{i=1}^\infty := (V_{\pi(i)})_{i=1}^\infty$ by making the arrangement $R_{i,j} := (F_{i,j}, G_{i,j}, H_{i,j})$ for each $i,j \ge 1$ and then stating that $R_{i,j} \le R_{i',j'}$ exactly when $i+j < i'+j'$ or when $i+j = i'+j'$ and $j < j'$; that is, make the arrangement $R_{1,1}, R_{2,1}, R_{1,2}, R_{3,1}, R_{2,2}, R_{1,3}, \cdots$. The details are provided below.

 $\{V_i\}_{i=1}^\infty$ covers the compact set $\partial V_1$, so there exists some finite subcover. Let $\mc{G}_{1,1}$ denote one possible choice, indexed by $G_{1,1} \subset \mb{N}$ with maximum index $g(1,1)$. Let $H_{1,1} := \{1,2,\cdots,g(1,1)\} - G_{1,1}$.

Now $\{V_i : i > g(1,1)\}$ forms a basis by Proposition \ref{prop_2}, so perform the same procedure on $\partial V_2$ using the new basis, obtaining the collection $\mc{G}_{2,1}$, indexed by $G_{2,1}$ and number $g(2,1)$. Define $H_{2,1} := \{g(1,1)+1, g(1,1)+2, \cdots, g(2,1)\} - G_{2,1}$. Next to construct is $\mc{G}_{1,2}, G_{1,2}, \text{ and } g(1,2)$. For each (non-empty) $A \in \mc{A}_{g(2,1)}$, there exists a basis set $V_{k(A)}$ such that $ \overline{V_{k(A)}} \subset A$ and $k(A) > g(2,1)$. Doing this for every $A \in \mc{A}_{g(2,1)}$, let $F_{1,2} := \{k(A) : A \in \mc{A}_{g(2,1)}\}$ and let $\mc{F}_{1,2} := \{ \overline{V_k} : k \in F_{1,2}\}$. The open set $\cup\mc{G}_{1,1} - \cup\mc{F}_{1,2}$ is the union of some covering from the basis $\{V_i : i > g(2,1) \land i \notin F_{1,2}\}$, so choose a finite subcovering $\mc{G}_{1,2}$, indexed by $G_{1,2}$ with maximum index $g(1,2)$, which covers $\partial V_{1,1}$. Now denote $H_{1,2} := \{g(2,1)+1, g(2,1) +2,\cdots,g(1,2)\} -  (G_{1,2} \cup F_{1,2})$.

Inductively, with $\mc{G}_{i,j}, G_{i,j}, g(i,j), F_{i,j}, H_{i,j}$ previously determined when $i+j \le m$, use the basis $\{V_i : i > g(1,m-1) \}$ to select a finite subcover $\mc{G}_{m,1}$ of $\partial V_m$ indexed by $G_{m,1}$ with maximum index $g(m,1)$. Construct $\mc{F}_{m-1,2}$, $F_{m-1,2}$, $\mc{G}_{m-1,2}$, $G_{m-1,2}$, $g(m-1,2)$ and $H_{m-1,2}$ based on the $(m,1)$ step similar to how $\mc{F}_{1,2}$, $F_{1,2}$, $\mc{G}_{1,2}$, $G_{1,2}$, $g(1,2)$ and $H_{1,2}$ were constructed based on the $(2,1)$ step. Repeat this again by constructing $\mc{F}_{m-2,3}$, $F_{m-2,3}$, $\mc{G}_{m-2,3}$, $G_{m-2,3}$, $g(m-2,3)$ and $H_{m-2,3}$ based on the $(m-1,2)$ step. Continue in this manner until $\mc{F}_{1,m}, F_{1,m}, \mc{G}_{1,m}, G_{1,m}, g(1,m)$ and $H_{1,m}$ are constructed. Now all appropriate numbers and collections have been found for when $i+j = m+1$.

Complete this process via induction.

Now let $\kappa$ be the finitely-additive set function created in Lemma \ref{lem_set_functions} with respect to $(V_{\pi(i)})$. It remains to verify the desired properties. For any $\partial V_i$, the sequence of covers $\{\mc{G}_{i,j}\}_{j=1}^\infty$ developed above will satisfy (for all natural $j$) the inequality 
$$\kappa (\cup \mc{G}_{i,j}) \le \kappa (\cup \mc{G}_{i,1}) \cdot \left(\frac{1}{2}\right)^{j-1} .$$
 This will be shown via induction. The inequality is obvious when $j=1$. Assume that the inequality is true for some natural $j$. Then (3) from Lemma \ref{lem_set_functions} ensures that
\begin{align*}
\kappa (\cup\mc{G}_{i,j+1}) &\le \kappa (\cup \mc{G}_{i,j} - \cup \mc{F}_{i,j+1}) = \frac{1}{2} \kappa (\cup\mc{G}_{i,j}) \le \frac{1}{2} \cdot \kappa (\cup \mc{G}_{i,1}) \cdot \left(\frac{1}{2}\right)^{j-2}.
\end{align*}

Therefore, it follows that $ \displaystyle \kappa^\ast (\partial V_i) \le \kappa (\cup \mc{G}_{i,j}) \le \kappa (\cup \mc{G}_{i,1}) \cdot \left(\frac{1}{2}\right)^{j-1}$ for all natural $j$, which implies that $\kappa^\ast (\partial V_i) = 0$. Since $i \in \mb{N}$ was arbitrary, it follows that all sets in $\mc{C}$ have outer measure zero, showing (a).

Let $m \in \mb{N}$. The basis sets $\{V_k: k \in F_{1,m}\}$ fragment each of the sets in $\mc{A}_{2,m-1}$, resulting in 
$$\max\{\kappa(A): A \in \mc{A}_{1,m}\} \le \dfrac{1}{2} \max \{\kappa(A): A \in \mc{A}_{2,m-1}\}.$$ 
Since $\max\{\kappa(A): A \in \mc{A}_n\}$ is a non-increasing function of $n$, it follows from above that $\max\{\kappa(A): A \in \mc{A}_n\} \to 0$ as $n \to \infty$.
~~$\blacksquare$
\end{proof}

Let $(\mc{A}'_k)$ and $(\mc{D}'_k)$ denote the collections of sets formed in Lemma \ref{lem_rings_of_sets} when applied to the permuted sequence $(V_{\pi(i)})$ (Recall that $\mc{D}' = \mc{D}$). Now we construct a measure on $\mc{D}$ using the steps from the introduction. 

Step 1: Construct the set function $\kappa: \mc{D} \to [0,1]$ with respect to $(V_{\pi(i)})$ via Lemmas \ref{lem_set_functions} and  \ref{lem_permutation}. Consequently, $\kappa$ is finitely additive.

It follows from finite additivity that $\kappa$ is $\sigma$-superadditive in $\mc{D}$; that is, given pairwise disjoint $(A_i)_{i=1}^\infty$ in $\mc{D}$, we have that $\kappa( \uplus_{i=1}^\infty A_i) \ge \sum_{i=1}^\infty \kappa(A_i)$. Now we need to show that $\kappa$ is $\sigma$-subadditive in $\mc{D}$. To this end, we consider the outer measure 
\begin{align*}
\kappa^\ast (A) :&= \inf\left\{\sum_{i=1}^\infty \kappa(A_i) : A \subseteq \cup_{i=1}^\infty A_i \text{ and } A_i \in \mc{D} \text{ are open for } i \in \mb{N}\right\},
\end{align*}
which is known to be finitely additive and $\sigma$-subadditive. By showing that $\kappa = \kappa^\ast$ on the ring $\mc{D}$, $\kappa$ will be a premeasure on $\mc{D}$ capable of being extended to a measure on Borel $\sigma$-algebra $\Sigma_X$, thus we proceed to Step 2.

Step 2: To show that $\kappa^\ast \le \kappa$ on open sets in $\mc{D}$, let $U \in \mc{D}$ be open. Then $U \subseteq U \cup \varnothing \cup \varnothing \cup \dots$, so we obtain that $\kappa^\ast (U) \le \kappa(U) + \kappa(\varnothing) + \kappa(\varnothing) + \dots = \kappa(U)$.

Step 3: The following argument shows that $\kappa \le \kappa^\ast$ on compact sets in $\mc{D}$.
Let $C \in \mc{D}$ be compact, and let $\eps > 0$. Choose some sequence $(A_i)_{i=1}^\infty$ of open sets in $\mc{D}$ with $C \subseteq \cup_{i=1}^\infty A_i$ and such that $\sum_{i=1}^\infty \kappa (A_i) \le \kappa^\ast(C) + \eps$. Then there exists some finite subcover, meaning there exists $n \in \mb{N}$ with $C \subseteq \cup_{i=1}^n A_i$ and there exists some $N \in \mb{N}$ with $C,A_1,\cdots,A_n \in \mc{D}'_N$. Therefore $\kappa(C) = \kappa_N(C) \le \sum_{i=1}^n \kappa_N(A_i) = \sum_{i=1}^n \kappa(A_i) \le \sum_{i=1}^\infty \kappa(A_i) \le \kappa^\ast(C) + \eps$.
With $\eps$ arbitrary, it follows that $\kappa \le \kappa^\ast$ on compact sets in $\mc{D}$.

Step 4: It has been shown in Lemma \ref{lem_permutation} that $\kappa^\ast(\cup_{k=1}^\infty \partial V_k) = 0$, which means that $\kappa^\ast$ is zero on $\mc{C}$.

Step 5: To show that $\kappa^\ast = \kappa$ on $\mc{D}$, let $A \in \mc{D}$. Note that $A$ is precompact. We see that $A^\circ$ is open in $\mc{D}$ and $\overline{A} - A^\circ \in \mc{C}$, so it follows that 
$$\kappa (A) \le \kappa\left(\overline{A}\right) \le \kappa^\ast \left(\overline{A}\right) = \kappa^\ast(A) = \kappa^\ast(A^\circ) \le \kappa (A^\circ) \le \kappa(A) .
$$  
Therefore, $\kappa^\ast = \kappa$ is a pre-measure on the ring $\mc{D}$. 

Step 6: Now we can apply Carath\'{e}odory's Extension Theorem. $\kappa^\ast$ extends to a measure on a $\sigma$-algebra containing $\sigma(\mc{D}) = \Sigma_X$, which can be restricted to a measure $\kappa^\dag$  on $\Sigma_X$. It follows that $\kappa^\dag$ is a finite strictly-positive measure on $(X,\Sigma_X)$ since each set in the basis sequence $(V_i)$ was assigned a positive measure by $\kappa$ on $\mc{D}$. Since $X$ is a Polish space, we automatically have that $\kappa^\dag$ is regular (see Theorem 8.1.12 in Cohn [1]).

To show that $\kappa^\dag$ is non-atomic, we apply the following lemma. 

\begin{lemma}\label{lem_non_atomic}
Let $\mu$ be a finite measure on $(X,\Sigma)$. Then $\mu$ is non-atomic if and only if for every $\eps > 0$, there exists a finite partition of $X$ into measurable sets with each set having $\mu$-measure less than $\eps$.
\end{lemma}

Using Lemma \ref{lem_non_atomic}, it suffices to show that for all $\eps > 0$, $X$ can be partitioned into a finite collection of measurable sets, each with $\kappa^\dag$ measure no more than $\eps$. Let $\eps>0$, and choose some integer $m$ with $\eps \ge 2^{1-m}$. Then according to Lemma \ref{lem_permutation}, all sets from $\mc{A}'_{g(1,m)} = \mc{A}_{g(1,m)}$ have $\kappa^\dag$ measure no more than $\eps$ each because complete fragmentation by closed holes has occurred at least $m-1$ times. Furthermore, $\kappa^\dag (\cup_{i=1}^\infty \partial V_i) = 0$ and $\kappa^\dag\left(\cap_{i=1}^{g(1,m)} V_i^e\right) \le 2^{-g(1,m)} \sum_{j=1}^\infty 2^{-j} \le \eps$. Since $X$ = $(\cup_{i=1}^\infty \partial V_i) \cup \left(\cap_{i=1}^{g(1,m)} V_i^e\right) \cup (\cup \mc{A}_{g(1,m)})$, $\kappa^\dag$ is non-atomic.

At last, $\kappa^\dag$ is a measure on $(X,\Sigma_X)$ with the sought properties.~~$\blacksquare$
\end{proof}

\section*{Bibliography}

[1] Cohn, D. \emph{Measure Theory}. Birkh\"{a}user (2013). 2nd Ed. p 245-246.

\end{document}